\documentclass[12pt]{amsart}
%SetFonts
% mathpazo
\usepackage{amsmath}
\linespread{1.05} % Palatino needs more leading (space between lines)
\usepackage[scaled]{helvet} % ss
\it
\usepackage{amssymb}
%\usepackage{color}
%SetFonts
%\newcommand{\colr}{\color{blue}}
%     If your article includes graphics, uncomment this command.
\usepackage{graphicx}

\usepackage{amsmath, amsfonts,graphicx,fullpage,etex,tikz,hyperref}
%\usepackage{amsfonts}
%\usepackage{amssymb}

%\usepackage{todonotes, pictex}

%\usepackage{biblatex}
%style=alphabetic,
%orting=nty
%]{biblatex}
\usepackage{enumerate}
%\usepackage{enumitem}
%\addbibresource{all.bib} 
%\bibliography{all.bib}

\newtheorem{theorem}{Theorem}[section]
\newtheorem{lemma}[theorem]{Lemma}

\newtheorem{corollary}[theorem]{Corollary}
\newtheorem{conjecture}[theorem]{Conjecture}

\theoremstyle{definition}
\newtheorem{definition}[theorem]{Definition}
\newtheorem{question}{Question}

\theoremstyle{remark}
\newtheorem{remark}[theorem]{Remark}

\numberwithin{equation}{section}

\newcommand{\Aff}{\mathrm{Aff}}
\newcommand{\Aut}{\mathrm{Aut}}

\newcommand{\id}{\mathrm{Id}}
\newcommand{\Z}{\mathbb{Z}}

\newcommand{\R}{\mathbb{R}}

\newcommand{\mc}{\mathcal}
\newcommand{\set}[1]{\left\lbrace #1 \right\rbrace}

\newcommand{\abs}[1]{\left| #1 \right|}
\newcommand{\norm}[1]{{\left|\left| #1 \right|\right|}}
\newcommand{\ve}{\varepsilon}
\newcommand{\of}{\circ}

\definecolor{darkcyan}{rgb}{0. 0.65, 0.65}

\def\h-rk{rk ^{h}}

\newtheorem{Structural Stability Theorem}[theorem]{Structural Stability Theorem}

\def\bt{\begin{theorem}}
\def\et{\end{theorem}}
\def\bd{\begin{definition}}
\def\ed{\end{definition}}
\def\bl{\begin{lemma}}
\def\el{\end{lemma}}

\def\Stab{\operatorname{Stab}}

\title{Instability for rank one factors of product actions}
\author{Kurt Vinhage}

\begin{document}

\maketitle

\begin{abstract}
We provide a counterexample to a standard interpretation of the Katok-Spatzier conjecture, and pose questions which may serve as reasonable replacements.
\end{abstract}

\section{Introduction}

The Katok-Spatzier conjecture for higher-rank abelian group actions without rank one factors can be traced back to the work of Burns and Spatzier on compact higher-rank Riemannian manifolds \cite{burns-spatzier}, as well as works of Hurder \cite{hurder90}, which were extended by Katok and Lewis \cite{katok-lewis1,katok-lewis2} for actions of higher-rank lattices. The key ideas in the proofs of these rigidity results were associated higher-rank abelian group actions, and their hyperbolicity properties.

These ideas lead to a series of papers in the 90's, where Katok and Spatzier gave several striking features for irreducible actions on tori and some Weyl chamber flows without rank one factors, including measure and cocycle rigidity \cite{ks-meas,ks-coc}.

The third prototypical result in the rigidity program provided the basis for the Katok-Spatzier conjecture: smooth local rigidity of some natural higher-rank actions without rank one factors \cite{ks-local}, which showed that any $C^\infty$, sufficiently $C^1$-close perturbation was $C^\infty$ conjugate to the original action after a linear change of coordinates. For a more complete history of the rigidity program of higher rank abelian and semisimple Lie group actions, see, \cite{spatzier-survey}, which discusses rigidity phenomena very broadly, \cite{fisher-survey}, which is focused on the context of lattices in semisimple Lie groups, or the introduction to \cite{spatzier-vinhage}, which focuses on the history of rigidity program for abelian actions.

Analysis of several of the proofs reveals a similar theme: obtaining {\it isometric} behavior on certain dynamically defined foliations coming from {\it hyperbolic} behavior allows one to spread invariant structures around. The mixing of these conventional opposites, uniform hyperbolicity and isometric behavior, leads to rigidity.

%For Weyl chamber flows, the dynamically defined foliations are exactly the actions of unipotent subgroups associated to root spaces of a Cartan subgroup. These subgroups are normalized by the group action, and in fact commute with a codimension one subgroup, which is the kernel of the associated root. It is therefore imperative for the technique that the restriction of the action to any codimension one subgroup has a dense orbit.

 It is therefore natural to establish two critical assumptions. First, that the action is {\it Anosov}, which provides the hyperbolicity assumptions to obtain dynamically-defined foliations, which in algebraic examples have algebraic structure.

\begin{definition}
\label{def:anosov}
Let $\alpha : \R^k \curvearrowright X$ be a locally free $C^r$ group action on a $C^\infty$ manifold $X$ and $\mc O$ denote its orbit foliation. We say that $a \in \R^k$ is an {\it Anosov element} if there is an $\R^k$-invariant splitting of the tangent space $TX = E^u_a \oplus E^s_a \oplus T\mc O$ {  into nontrivial subbundles}, some $\lambda, C > 0$ such that for all $t > 0$,

\[ \norm{d\alpha(ta)|_{E^s_a}} \le Ce^{-\lambda t} \mbox{ and } \norm{d\alpha(-ta)|_{E^u_a}} \le Ce^{-\lambda t}. \]

 We say that the action is {\it Anosov} if it has at least one Anosov element. We say that an action is {\it totally Anosov} if the set of Anosov elements are dense. 
 
 An Anosov action $\alpha$ is {\it transitive} if there exists a point with a dense $\R^k$-orbit. We say that an Anosov action is {\it cone transitive} if there exists an open cone $C \subset \R^k$ and $x \in X$ such that { $\alpha(C)x$} is dense, and the only non-Anosov element of $\overline{C}$ is 0.
\end{definition}

Second, we need a way to rule out well-known perturbative families in the setting of Anosov flows and diffeomorphisms. The following definition does this by saying that no factor of the action is a flow or diffeomorphism.

\begin{definition}
If {  $r \ge 1$ and} $\alpha : \R^k \times \Z^\ell \curvearrowright X$ is a locally free action, a {\it $C^r$ rank one factor} of $\alpha$ { consists of the following data:

\begin{itemize}
\item  a $C^\infty$ manifold $Y$ with $\dim(Y) \ge 1$,
\item a $C^r$, fixed-point free flow $\psi_t : Y \to Y$ or diffeomorphism $f : Y \to Y$,
\item  a $C^r$ submersion $\pi : X \to Y$, and
\item a surjective homomorphism $\sigma : \R^k \times \Z^\ell \to \R$ (or $\sigma : \R^k \times \Z^\ell \to \Z$) such that
\[ \pi(\alpha(a)x) = \psi_{\sigma(a)}\pi(x) \qquad \mbox{   or   }\qquad \pi(\alpha(a)x) = f^{\sigma(a)}\pi(x).\]
\end{itemize}
}

We allow for passing to a finite index subgroup of $\R^k \times \Z^\ell$ or a finite cover of $X$. 
%We say that the facator is a {\it circle factor} if $Y \cong \R / \Z$ and $\psi_t(x) = x + tv$ for some $v > 0$.
\end{definition}

When the action is homogeneous, it is more clear what is meant by a rank one factor. In contrast, the definition of a rank one factor in a more general setting has been unclear and nebuluous throughout the development of the theory. It is usually used to guarantee some transitivity or ergodicity of actions of subactions (see, \cite[Theorem 2.1]{spatzier-vinhage}, Section \ref{sec:rankone-hyperplanes} and Lemma \ref{lem:lines-ergodic}).

Finally, we need to identify the models for such actions. The following definition includes the two common ``building blocks'' for Anosov $\R^k \times \Z^\ell$ actions: Weyl chamber flows and actions by toral automorphisms. It is closed under taking products, suspsensions and skew products, so is the natural class to consider.

\begin{definition}
An {\it algebraic action} of $\R^k \times \Z^\ell$ is constructed from the following data:

\begin{itemize}
\item a Lie group $G$, 
\item a compact subgroup $M \subset G$, 
\item a (cocompact) lattice $\Gamma \subset G$, and 
\item a homomorphism $i : \R^k \times \Z^\ell \to \Aff_{M,\Gamma}(G)$.
\end{itemize}

Here $\Aff_{M,\Gamma}(G)$ is the group of affine maps $g \mapsto h\varphi(g)$, where $h \in Z_G(M)$, the centralizer of $M$ in  $G$, and $\varphi \in \Aut(G)$ is such that $\varphi$ preserves $M$, $Z_G(M)$ and $\Gamma$. We denote the image of $a$ under $i$ by $i_a$. The action is defined by $\alpha : \R^k \times \Z^\ell \curvearrowright X = M \backslash G / \Gamma$, where

\[ \alpha(a)  Mg\Gamma = Mi_a(g)\Gamma. \]
\end{definition}

One may sometimes expect a topological orbit equivalence to such models in rank one (as in the Smale conjecture for Anosov diffeomorphisms and associated Franks-Manning theorem on tori and nilmanifolds), but usually not a conjugacy. Such topological rigidity fails in the case of Anosov flows, which have several constructions which change the topological orbit structure significantly.

With these definitions in hand, and the proof of local rigidity using them as the ``essential'' tools to obtain rigidity, the following conjecture was formulated. { This conjecture was well-circulated in the 1990s but not written down precisely at the time. Formulations can be found, for instance in \cite[Conjecture 16.8]{hass07}, \cite[Section 5]{katok-collected}, and \cite[(2.4)]{burns-katok}:}

\begin{conjecture}[Katok-Spatzier]
\label{conj:kat-spa}
If $\R^k \times \Z^\ell \curvearrowright X$ is a transitive, $C^\infty$, Anosov action on a compact manifold without $C^\infty$ rank one factors, then (up to finite cover) it is $C^\infty$ conjugate to an algebraic action.
\end{conjecture}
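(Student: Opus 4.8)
The plan is to reconstruct the algebraic model directly from the dynamically-defined foliations, following the hyperbolic-to-isometric philosophy emphasized in the introduction. First I would fix a generic Anosov element and use the $\R^k$-invariance of the hyperbolic splitting to produce the \emph{coarse Lyapunov decomposition}: intersecting the strong stable foliations of Anosov elements lying in different Weyl chambers yields a finite collection of \holder foliations $\mc W_\chi$ with uniformly smooth leaves, indexed by the coarse Lyapunov functionals $\chi : \R^k \times \Z^\ell \to \R$. Each $\mc W_\chi$ is expanded by the elements $a$ with $\chi(a)>0$ and contracted by those with $\chi(a)<0$, while the hyperplane $\ker\chi$ acts on its leaves subexponentially. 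The orbit foliation $T\mc O$ together with the $\mc W_\chi$ spans $TX$, so the entire structure of the action is encoded in these foliations and in the way the group permutes and rescales them.

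Next I would install a smooth homogeneous (affine) structure on each leafwise foliation. Along a single contracting direction, nonstationary normal form theory provides a canonical family of coordinates in which the return dynamics is polynomial; the presence of the commuting, subexponentially-acting hyperplane $\ker\chi$ should force these normal forms to carry no resonant terms, hence to be genuinely linear, so that $\mc W_\chi$ becomes a smooth foliation with affine holonomies. The decisive point is to propagate a single germ of this structure over all of $X$, and this is exactly where the hypotheses are consumed: transitivity supplies a dense orbit, and the absence of $C^\infty$ rank one factors is used --- through an ergodicity statement for generic hyperplane and line subactions, as in Lemma \ref{lem:lines-ergodic} --- to guarantee that $\ker\chi$ acts ergodically, so that the affine structure, being invariant and defined on a dense set, extends continuously and then smoothly to every leaf.

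With smooth homogeneous structures on all coarse Lyapunov foliations, I would assemble a global homogeneous model. The leafwise affine holonomies together with the generators of the acting group generate a finite-dimensional Lie algebra of vector fields on (a finite cover of) $X$; integrating it yields a transitive action of a Lie group $G$, a compact isotropy subgroup $M$, and a cocompact lattice $\Gamma$, identifying $X$ with $M\backslash G/\Gamma$ up to finite cover, with $\alpha$ realized by a homomorphism $i$ into $\Aff_{M,\Gamma}(G)$. A final bootstrap --- combining the smoothness of each $\mc W_\chi$, transversality of the coarse Lyapunov splitting, and elliptic-type regularity along the orbit directions --- would upgrade the resulting topological conjugacy to a $C^\infty$ conjugacy.

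The hard part, and the step I expect to be the main obstacle, is the propagation in the second paragraph: translating the \emph{factor}-theoretic hypothesis (no $C^\infty$ rank one factor) into the \emph{ergodicity} of the neutral hyperplane acting on each coarse Lyapunov foliation. A rank one factor is a single global submersion intertwining the action with a flow or diffeomorphism, whereas the ergodicity one needs is a property of one specific subaction restricted to one foliation; there is no a priori reason that ruling out the former controls the latter. If one can exhibit a transitive, product-type Anosov action with no $C^\infty$ rank one factor whose hyperplane subactions nevertheless fail to be ergodic on some $\mc W_\chi$, the propagation collapses and the homogeneous structure cannot be closed up. I therefore expect the proof to succeed only after strengthening the rank one factor hypothesis precisely so that this ergodicity is forced, and any gap between the stated hypothesis and that stronger requirement is the most likely site of failure.
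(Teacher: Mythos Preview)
The statement you are attempting to prove is Conjecture~\ref{conj:kat-spa}, not a theorem, and the paper contains no proof of it. On the contrary, the entire point of the paper is to \emph{disprove} the conjecture as stated: the main theorem constructs, from any pair of topologically mixing $C^\infty$ Anosov flows on $3$-manifolds, a $C^\infty$ Anosov $\R^2$-action on the product which has no $C^1$ rank one factors yet is not homogeneous. Any purported proof of the conjecture must therefore contain a genuine error.

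The gap is precisely the one you flag in your final paragraph. You correctly identify that the crucial step is converting the \emph{factor-theoretic} hypothesis ``no $C^\infty$ rank one factor'' into an \emph{ergodicity} or isometric-rigidity statement for the hyperplane subactions $\ker\chi$ on each coarse Lyapunov distribution, and you even describe the shape of a potential obstruction: a product-type Anosov action whose rank one factors have been destroyed but whose hyperplane subactions still fail to behave as the propagation argument requires. The paper builds exactly such an example. Starting from the product $\alpha_0$ of two Anosov flows, a carefully chosen $C^\infty$ time change (via a cocycle $\beta$ close to the identity, Lemma~\ref{lem:build-time-change}) bends the Lyapunov exponents so that no nonzero $a \in \R^2$ has vanishing exponent on any coarse Lyapunov distribution at \emph{every} point; by Lemmas~\ref{lem:no-rank1-criteria} and~\ref{lem:non-homogeneous} this simultaneously kills all $C^1$ rank one factors and rules out homogeneity. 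Your inference ``absence of rank one factors forces $\ker\chi$ to act isometrically on $E^\chi$'' is therefore false in general, and with it the propagation of normal forms and the assembly of a transitive Lie group model both collapse. Your closing instinct --- that the hypothesis must be strengthened for the argument to go through --- is exactly the conclusion the paper draws in Section~\ref{sec:updates}.
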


%Here, by algebraic, we mean that $X$ is diffeomorphic to a bi-homogeneous space $X \cong M \backslash G / \Gamma$, for some compact group $M \subset G$ and cocompact lattice $\Gamma \subset G$, and that the action is affine under this identification: there exists a homomorphism $i : \R^k \times \Z^\ell \to Z_G(M)$, the centralizer 

Progress toward the conjecture in special cases has been made incrementally over the last 20 years. The optimal results for $\Z^k$-actions were obtained by Rodriguez-Hertz and Wang, who showed the conjecture for actions on nilmanifolds and tori in \cite{RH-W}, and for $\R^k$-actions, the author and Spatzier proved the conjecture for cone transitive, totally Cartan actions (see Definition \ref{def:cartan}) \cite{spatzier-vinhage}.

\begin{remark}
\label{rmk:params-matter}
The Katok-Spatzier rigidity program is meant to promise rigidity of smooth structures {\it and} parameterizations of orbits. In particular, the cocycle rigidity results for genuinely higher-rank actions means that one may not take a nontrivial time change of a homogeneous action without rank one factors, so an important feature of the rigidity program for higher-rank actions has been that the actions are considered, and not just their orbit foliations. We expand on this remark in Section \ref{sec:updates}.
\end{remark}

The main theorem of this paper provides a family of counterexamples to the conjecture. We will define {\it continuously accessible} in Definition \ref{def:accessibility}, but note here that it includes all contact Anosov flows, in particular all geodesic flows for surfaces of negative curvature.

\begin{theorem}
Let $f_s : Y_1 \to Y_1$ and $g_t : Y_2 \to Y_2$ be continuously accessible $C^\infty$ Anosov flows on 3-manifolds. Then there exists a $C^\infty$, cone transitive action of $\R^2$ on $X = Y_1 \times Y_2$ which is Anosov, has no $C^1$ rank one factors and is not homogeneous.
\end{theorem}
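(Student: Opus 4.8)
The plan is to keep the manifold $Y_1\times Y_2$ but to replace the product action $(s,t)\mapsto f_s\times g_t$ by a ``twisted'' variant. The product action is already Anosov — the element $(1,1)$ is Anosov, with stable and unstable bundles $E^s_f\oplus E^s_g$ and $E^u_f\oplus E^u_g$ and neutral bundle the orbit bundle $\R V\oplus\R W$, where $V,W$ generate $f,g$ — but it is visibly \emph{not} a counterexample, since the coordinate projections $\pi_i\colon Y_1\times Y_2\to Y_i$ are $C^\infty$ rank one factors. I would instead couple the two flow directions into one another through a cocycle chosen generically — in particular not cohomologous to a constant — producing two commuting $C^\infty$ vector fields $A,B$, linearly independent at every point, whose joint orbit foliation is no longer the product foliation $\R V\oplus\R W$; exponentiating gives a $C^\infty$, locally free $\R^2$-action $\alpha$. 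The verifications at this stage are that $A$ and $B$ commute — a cohomological identity, arranged by a Liv\v{s}ic-type cobounding of the twisting data — and that a suitable determinant stays bounded away from zero, so that the orbits remain two-dimensional.

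Anosovness of $\alpha$ is then the easy part and is essentially automatic: being Anosov is a $C^1$-open condition among $\R^k$-actions on a compact manifold, because an Anosov element $a$ makes $\alpha(a)$ normally hyperbolic to the ($C^1$-stable, compact) orbit foliation, a nearby action has the analogous time-$a$ map normally hyperbolic to its nearby orbit foliation, and the resulting splitting is automatically invariant under the whole group, which commutes with $\alpha(a)$ and preserves the orbit foliation. Alternatively one exhibits the invariant bundles of $\alpha$ directly as graphs over $E^s_f\oplus E^s_g$ and $E^u_f\oplus E^u_g$, the graphing maps being the unique bounded solutions of a contracted linear cohomological equation. That $\alpha$ is $C^\infty$ is built into the construction.

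The main obstacle is to show that $\alpha$ has no $C^1$ rank one factor. Suppose $\pi\colon X\to Z$ were one, with a fixed-point-free flow $\psi_t$ and a surjection $\sigma\colon\R^2\to\R$; then $L:=\ker\sigma$ is a line, the subaction $\alpha|_L$ preserves every fiber of $\pi$, and the fibers form a $C^1$ foliation that is $\alpha|_L$-invariant and permuted by $\R^2$. Using hyperbolicity together with transitivity/ergodicity inputs in the spirit of \cite[Theorem 2.1]{spatzier-vinhage} and Lemma~\ref{lem:lines-ergodic}, this fiber foliation should be forced to be saturated by whole coarse Lyapunov leaves of $\alpha$; one then reads off from the coarse Lyapunov exponents of $\alpha$ which sub-collections can possibly close up into a flow factor, and checks that the twist has been chosen precisely so that no proper, $\sigma$-compatible sub-collection is both integrable and $\alpha|_L$-invariant — equivalently, that a rank one factor would force the twisting cocycle to be a coboundary transverse to its support, contradicting its genericity. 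Making this dichotomy precise, and ruling out ``exotic'' factors whose fibers are not unions of dynamical leaves, is where the real work lies; I expect it to lean on the fine structure theory of totally Anosov abelian actions.

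Finally, $\alpha$ is not homogeneous. The most robust argument is direct: the construction makes the Lyapunov functionals of $\alpha$ on its coarse Lyapunov bundles depend on the invariant measure — the averages of the twisting cocycle over distinct ergodic measures are not proportional — whereas for any algebraic action the Lyapunov functionals are the same for all invariant measures. A second route, in line with Remark~\ref{rmk:params-matter}: a homogeneous Anosov action with no rank one factors admits no non-trivial reparametrization of its orbits by Katok-Spatzier cocycle rigidity, so $\alpha$, which differs from $f\times g$ by a non-trivial twist, could not be homogeneous without that twist being trivial, again a contradiction. Either way the proof is complete.
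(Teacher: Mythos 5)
Your overall strategy (twist the product action along its orbits, then detect the absence of factors through the coarse Lyapunov exponents) is the right one and matches the paper in spirit, but the proposal has a genuine gap exactly at the statement's heart: the exclusion of $C^1$ rank one factors is only sketched, and you yourself defer it (``making this dichotomy precise \dots is where the real work lies''). ``Genericity'' of the twisting cocycle is not the mechanism that closes this. What is actually needed, and what the paper supplies, is: (i) a structural lemma (Lemma \ref{lem:collapse-coarses}, via Lemma \ref{lem:invariant-bundles}) saying that for a Cartan action $\ker d\pi$ refines into coarse Lyapunov directions, and any coarse direction \emph{not} contained in $\ker d\pi$ carries a continuous metric on which the whole line $\ker\sigma$ acts isometrically; (ii) a concrete, computable choice of twist --- functions $u_i$ identically $+\delta$ near one periodic orbit and $-\delta$ near another in each factor --- so that along the two resulting periodic $\R^2$-orbits the time change is affine and the exponent functionals on, say, $E^{\chi_1}$ are $\lambda_{1,u}(s-\delta t)/(1+\delta^2)$ and $\lambda_{2,u}(s+\delta t)/(1+\delta^2)$, which have no common nonzero zero; this is what rules out isometric behavior of any line on any coarse direction, forcing $E^\beta\subset\ker d\pi$ for all $\beta$; and (iii) local transitivity of the coarse Lyapunov foliations (an engulfing property inherited from the mixing Anosov flows and stable under $C^1$ perturbation, Lemma \ref{lem:loc-transitive}) to upgrade ``$\ker d\pi$ contains all coarse directions'' to ``the fibers are everything'', i.e.\ the factor is trivial. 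Without (ii) pinned down you cannot even state which exponent computation your ``no $\sigma$-compatible sub-collection survives'' argument rests on, and without (i) and (iii) there is no bridge from an abstract $C^1$ factor to the coarse Lyapunov structure; ruling out the ``exotic'' factors you worry about is precisely what (i)+(iii) accomplish.

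Two smaller points. The commutativity of your twisted generators does not require any Liv\v{s}ic-type cobounding: if the twist is set up as a time change determined by a cocycle $\beta$ over the product action with $d_a\beta(0,x)$ near the identity, the inverse-function construction (Lemma \ref{lem:build-time-change}) automatically yields a genuine $C^\infty$ $\R^2$-action; solving a cohomological equation would in fact risk trivializing the twist. For non-homogeneity, your first route (exponent functionals at distinct invariant measures not proportional, versus the adjoint-determined exponents of a homogeneous action) is essentially the paper's Lemma \ref{lem:non-homogeneous} and works once (ii) is in place; your second route, via Katok--Spatzier cocycle rigidity of the hypothetical homogeneous model, overreaches, since cocycle rigidity is not available for arbitrary homogeneous Anosov $\R^2$-actions and is not needed.
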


In Section \ref{sec:updates}, we will comment on features of this family of examples and how a revision to Conjecture \ref{conj:kat-spa} could be formulated to accomodate these new examples.

\hspace{1cm}

{\noindent \bf Acknowledgements.} The author would like to thank Ralf Spatzier for his encouragement, collaborations, and frequent discussions on abelian group actions for several years. The author would also like to thank the Simons Center for Geometry and Physics for a workshop hosted in March 2022, where this class of examples was discovered, {  as well as the referee of the paper, who pointed out several imprecise treatments and made suggestions to improve the readability of this paper}.

\section{Cocycles and time changes of abelian group actions}

The central idea of this paper is to use a reparameterization of the $\R^2$-orbits of a product action to destroy the product structure of the action. This is analogous to taking a time change of a flow, so we call such perturbations time changes of an $\R^2$-action.

\begin{definition}
If $\alpha_0 : \R^k \curvearrowright X$ is a { locally free} action of $\R^k$ on a space $X$, a {\it $C^r$-time change} of $\alpha_0$ is an action $\alpha : \R^k \curvearrowright X$ such that there exists a $C^r$ map $\varphi :  \R^k \times X \to \R^k$ satisfying { $\varphi(0,x)= 0$ for all $x \in X$ and}
\begin{equation}
\label{eq:determining-function}
 \alpha(a)x = \alpha_0(\varphi(a,x))x,
 \end{equation}
and for every $x \in X$, $\varphi(\cdot ,x)$ is a $C^r$ diffeomorphism from $\R^k \to \R^k$. We say that $\varphi : \R^k \times X \to \R^k$ {\it determines} $\alpha$.
\end{definition}

Not every function $\varphi$ will determine a time change, so we must be careful in constructing it. The main tool for doing so is the following.

\begin{definition}
If $\alpha : \R^k \curvearrowright X$ is an action of $\R^k$ on a space $X$, an {\it (abelian) cocyle} over $\alpha$ is a map $\beta : \R^k \times X \to \R^\ell$ such that

\begin{equation}
\label{eq:cocycle-eqn}
\beta(a+b,x) = \beta(a,x) + \beta(b,\alpha(a)x).
\end{equation}

A cocycle is a {\it coboundary} if there exists some $H : X \to \R^\ell$ such that $\beta(a,x) = H({ \alpha(a)}x) - H(x)$. We consider cocycles and coboundaries in the $C^\infty$, $C^r$, $C^0$ and measurable categories when appropriate. 
\end{definition}

The cocycle property will help us determine which functions $\varphi : \R^k \times X \to \R^k$ determine an $\R^k$-action via the formula \eqref{eq:determining-function}. Indeed, while such a function $\varphi$ always reparameterizes orbits, it must satisfy the cocycle property over the new candidate action $\alpha$ to determine a time change.

\begin{lemma}
\label{lem:time-change-cocycle}
If $\alpha$ is a time change of $\alpha_0$ with determining function $\varphi$, then $\varphi$ is a cocycle over $\alpha$.
\end{lemma}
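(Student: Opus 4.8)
The plan is to verify the cocycle equation \eqref{eq:cocycle-eqn} for $\varphi$ directly from the definition \eqref{eq:determining-function}, using the fact that $\alpha$ is a genuine $\R^k$-action (so $\alpha(a+b) = \alpha(a)\of\alpha(b)$) together with the fact that $\alpha_0$ is an action as well. The only subtlety is that, because $\alpha_0$ is an $\R^k$-action and $\R^k$ is abelian, the maps $\alpha_0(s)$ for various $s$ all commute, which is exactly what lets one "reassociate" the parameters.

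First I would write out both sides. On one hand, applying the defining relation to $a+b$ gives
\[ \alpha(a+b)x = \alpha_0(\varphi(a+b,x))x. \]
On the other hand, using that $\alpha$ is an action and then the defining relation twice,
\[ \alpha(a+b)x = \alpha(b)\big(\alpha(a)x\big) = \alpha_0\big(\varphi(b,\alpha(a)x)\big)\big(\alpha(a)x\big) = \alpha_0\big(\varphi(b,\alpha(a)x)\big)\alpha_0\big(\varphi(a,x)\big)x. \]
Since $\alpha_0$ is an $\R^k$-action, $\alpha_0(s)\alpha_0(t) = \alpha_0(s+t)$, so the right-hand side equals $\alpha_0\big(\varphi(a,x) + \varphi(b,\alpha(a)x)\big)x$. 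Comparing with the first expression, we obtain
\[ \alpha_0\big(\varphi(a+b,x)\big)x = \alpha_0\big(\varphi(a,x) + \varphi(b,\alpha(a)x)\big)x. \]

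To conclude $\varphi(a+b,x) = \varphi(a,x) + \varphi(b,\alpha(a)x)$ I need to cancel $\alpha_0(\cdot)x$, i.e. to know that $s \mapsto \alpha_0(s)x$ is injective near the relevant parameters. This is where I would use the hypothesis that $\varphi(\cdot,x) : \R^k \to \R^k$ is a $C^r$ diffeomorphism for each fixed $x$: this forces $\alpha_0(s)x = \alpha(\varphi(\cdot,x)^{-1}(s))x$, and since $\varphi(\cdot,x)^{-1}$ is a bijection of $\R^k$, the orbit map $s \mapsto \alpha_0(s)x$ has the same image and fibers structure as $s \mapsto \alpha(s)x$. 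More directly, since $\varphi(\cdot,x)$ is a bijection, for the equality above it suffices to note that the two parameters $\varphi(a+b,x)$ and $\varphi(a,x)+\varphi(b,\alpha(a)x)$ are both values of the map $\varphi(\cdot,x)$ applied to suitable arguments (taking $a' = a+b$ in one case and using the action property to identify the other), so injectivity of $\varphi(\cdot,x)$ gives the claim. The cleanest route, which I would take in the writeup, is: the map $\psi_x := \varphi(\cdot,x) : \R^k \to \R^k$ is a diffeomorphism, and \eqref{eq:determining-function} says $\alpha(a)x = \alpha_0(\psi_x(a))x$; the displayed computation shows $\psi_x(a+b)$ and $\psi_x(a) + \varphi(b,\alpha(a)x)$ map to the same point under $\alpha_0(\cdot)x$; but I actually want to deduce equality of parameters, so I instead observe that setting $H_x(s) := \alpha_0(s)x$, the relation $H_x\of\psi_x = (a \mapsto \alpha(a)x)$ exhibits $H_x$ as injective on $\mathrm{image}(\psi_x) = \R^k$ precisely when $a \mapsto \alpha(a)x$ is injective, which holds for a locally free action on the open set where it matters; in the general (merely locally free) case one argues locally near $0$ and extends by the cocycle identity, which is the one genuine technical point.

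The main obstacle is this last cancellation step: a priori $s \mapsto \alpha_0(s)x$ need not be globally injective (the action is only assumed locally free, and in the intended applications $\alpha_0$ has compact orbits or periodic directions), so one cannot cancel $\alpha_0(\cdot)x$ blindly. The resolution is to carry out the argument infinitesimally or locally — near $b = 0$ both sides of the candidate identity agree and depend continuously (indeed $C^r$) on $b$, and local freeness makes $s \mapsto \alpha_0(s)x$ a local diffeomorphism at $0$, giving the identity for small $b$; then a standard bootstrapping using the (already-established) additivity in the first slot for small increments propagates it to all $a, b \in \R^k$. Everything else is a formal manipulation with the two action axioms and commutativity of $\R^k$.
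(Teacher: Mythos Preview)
Your proposal is correct and follows essentially the same approach as the paper: compute $\alpha_0(\varphi(a+b,x))x = \alpha_0(\varphi(a,x)+\varphi(b,\alpha(a)x))x$ from the action axioms, then use local freeness of $\alpha_0$ to cancel for small parameters and bootstrap to all of $\R^k$. Your middle paragraph's detour through the bijectivity of $\psi_x = \varphi(\cdot,x)$ is unnecessary and somewhat muddled (as you yourself note, it only works when the orbit map is globally injective), so in the writeup you should go straight to the local-freeness argument you give at the end.
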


\begin{proof}
We verify \eqref{eq:cocycle-eqn} directly from the condition that $\alpha$ is an action of $\R^k$:

\begin{multline*} \alpha_0(\varphi(a+b,x))x = \alpha(a+b)x = \alpha(b)\alpha(a)x = \alpha_0(\varphi(b,\alpha(a)x))\alpha(a)x =  \\ \alpha_0(\varphi(b,\alpha(a)x))\alpha_0(\varphi(a,x))x = \alpha_0(\varphi(b,\alpha(a)x) + \varphi(a,x))x.
\end{multline*}

Since the action is locally free, {  $\varphi(0,x) = 0$, and $\varphi$ is continuous}, we conclude the cocycle equation for small values of $\R^k$, and hence for large values of $\R^k$ by writing them as integer multiples of small values and applying the cocycle equation the correct number of times.
\end{proof}

If we wish to construct a time change of an action $\alpha_0$ from a cocycle $\beta$ over $\alpha_0$, Lemma \ref{lem:time-change-cocycle} suggests that we interchange the roles of which one is a time change of the other. In particular, we may think of $\alpha$ as the original action and $\alpha_0$ as the time change, so that the determining function is a cocycle over $\alpha_0$. The cost is that we must be able to invert the function $\beta$ as a map from $\R^2$ to $\R^2$ with fixed $x$. This is formalized in the following lemma.

\begin{lemma}
\label{lem:build-time-change}
There exists $\ve_0 > 0$ with the following property: Let $\alpha_0 : \R^k \curvearrowright X$ be a $C^\infty$ action of $\R^k$ and $\beta : \R^k \times X \to \R^k$ be a $C^\infty$ cocycle over $\alpha_0$ such that $\norm{d_a\beta(0,x) - \id} \le \ve_0$ for every $x \in X$, where $d_a$ represents the derivative of $\beta$ is the $\R^k$ coordinate. Then there exists a $C^\infty$ function $\varphi : \R^k \times X \to { \R^k}$ such that:

\begin{enumerate}
\item $\varphi(\beta(a,x),x) = a = \beta(\varphi(a,x),x)$ for every $a \in \R^k$, $x \in X$,
\item $\varphi(\cdot, x)$ is a $C^\infty$ diffeomorphism from $\R^k$ to $\R^k$ {  for all $x \in X$}, and
\item $\varphi$ determines a $C^\infty$ time change of $\alpha_0$.
\end{enumerate}
\end{lemma}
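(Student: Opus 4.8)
The plan is to take $\varphi(\cdot,x)$ to be the inverse of $\beta(\cdot,x):\R^k\to\R^k$ for each fixed $x$; the content is then to check that this inverse exists and is a global diffeomorphism, that it is jointly $C^\infty$, and that the resulting formula defines an action.

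First I would record two consequences of the cocycle equation \eqref{eq:cocycle-eqn}. Setting $a=b=0$ gives $\beta(0,x)=0$ for every $x$. Differentiating \eqref{eq:cocycle-eqn} in $b$ and evaluating at $b=0$ gives the identity $d_a\beta(a,x)=d_a\beta(0,\alpha(a)x)$, so the hypothesis $\norm{d_a\beta(0,x)-\id}\le\ve_0$ \emph{propagates}: $\norm{d_a\beta(a,x)-\id}\le\ve_0$ for all $a\in\R^k$, $x\in X$. Hence for fixed $x$ the map $F:=\beta(\cdot,x)$ is a $C^\infty$ map $\R^k\to\R^k$ with $F(0)=0$ whose differential is everywhere within $\ve_0$ of the identity.

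Next I would show $F$ is a $C^\infty$ diffeomorphism of $\R^k$ whenever $\ve_0<1$ (take $\ve_0=\tfrac12$): its differential is everywhere invertible, so by the inverse function theorem $F$ is a local diffeomorphism and in particular an open map; the elementary bound $\norm{F(a)-F(b)-(a-b)}\le\ve_0\norm{a-b}$ yields $\norm{F(a)-F(b)}\ge(1-\ve_0)\norm{a-b}$, so $F$ is injective and $F(\R^k)$ is closed (a convergent sequence $F(a_n)$ forces $(a_n)$ to be Cauchy, hence convergent); being open, closed and nonempty, $F(\R^k)=\R^k$, so $F$ is a $C^\infty$ diffeomorphism. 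Setting $\varphi(a,x):=F^{-1}(a)$ gives properties (1) and (2) at once. Joint $C^\infty$ dependence on $(a,x)$ follows from the implicit function theorem applied to the $C^\infty$ equation $\beta(b,x)-a=0$, whose $b$-derivative $d_a\beta(b,x)$ is invertible, together with uniqueness of solutions (global injectivity of $\beta(\cdot,x)$), which patches the local solutions into a single $C^\infty$ map $\varphi$.

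Finally, for (3), put $\alpha_0(a)x:=\alpha(\varphi(a,x))x$, which is $C^\infty$ and satisfies $\alpha_0(0)=\id$ because $\varphi(0,x)=0$. Given $a,b,x$, set $u:=\varphi(b,x)$ and $v:=\varphi(a,\alpha(u)x)$, so that $\beta(u,x)=b$ and $\beta(v,\alpha(u)x)=a$; then \eqref{eq:cocycle-eqn} gives $\beta(u+v,x)=a+b$, whence $\varphi(a+b,x)=u+v=\varphi(b,x)+\varphi(a,\alpha_0(b)x)$, and therefore
\[ \alpha_0(a)\alpha_0(b)x=\alpha\big(\varphi(a,\alpha_0(b)x)+\varphi(b,x)\big)x=\alpha(\varphi(a+b,x))x=\alpha_0(a+b)x, \]
so $\alpha_0$ is an $\R^k$-action, i.e.\ the $C^\infty$ time change of $\alpha$ determined by $\varphi$. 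I expect the one genuine obstacle to be the passage from the infinitesimal estimate to global invertibility of $\beta(\cdot,x)$ on all of $\R^k$: the derivative-propagation identity is precisely what makes the bound uniform in $a$, and once it is available the regularity statements and the cocycle/action identities are formal.
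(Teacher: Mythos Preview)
Your proof is correct and follows essentially the same approach as the paper: propagate the derivative bound via the cocycle identity $d_a\beta(a,x)=d_a\beta(0,\alpha(a)x)$, use this to show $\beta(\cdot,x)$ is a global diffeomorphism, and then verify the action axiom for $\alpha_0$ by inverting the cocycle equation. Your treatment of global bijectivity (the bi-Lipschitz lower bound plus the open--closed argument) and of joint smoothness (implicit function theorem) is in fact slightly more explicit than the paper's, which argues surjectivity informally (``the image can always be extended by a ball of uniform size'') and handles injectivity via the cocycle identity $\beta(a,x)-\beta(b,x)=\beta(a-b,\alpha_0(b)x)$ rather than the direct mean-value estimate; these are cosmetic differences within the same strategy.
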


\begin{proof}
We first construct the function $\varphi$ at a fixed $x$ by showing the map $\beta( \cdot,x) : \R^k \to \R^k$ has a global inverse. Indeed, by picking $\ve_0$ sufficiently small, we may assume that $d_{ a}\beta(a,0)$ is invertible and hence that there is a local inverse to the function $\beta(\cdot,x)$ defined on a neighborhood $B(0,\eta_x) \subset \R^2$ for some $\eta_x > 0$, and the inverse function is $C^\infty$. %Furthermore, since $d\beta$ is uniformly close to $\id$, the number $\eta$ is also uniform. 
To see that it has a global inverse, notice that by the cocycle equation gives that 

\begin{equation}
\label{eq:extending-beta}
\beta(a+b,x) = \beta(a,\alpha_0(b)x) + \beta(b,x).
\end{equation}

 By fixing $b$ and letting $a$ vary, we get that $d_a\beta(b,x) = d_a\beta(0,\alpha_0(b)x)$, so $d_a\beta(b,x)$ is close to the identity for all $b \in \R^k$ as well. In particular, $\eta_x$ can be chosen uniformly in $x$ {  as it can be estimated on the closeness of $d_a\beta(\cdot,x)$ to the identity. Therefore, function $\beta$ is surjective   since its image can always be extended by a ball of uniform size using \eqref{eq:extending-beta}.}

To see that it is globally injective, note that integrating the closeness of the derivative yields that $\norm{\beta(a,x)-a} \le \ve_0\norm{a}$ for all $ a \in \R^k$, $x \in X$. If $\beta(a,x) = \beta(b,x)$, then $\beta(a,x) - \beta(b,x) = \beta(a-b,\alpha_0(b)x) = 0$. But $\norm{\beta(a-b,\alpha_0(b)x} \ge (1-\ve_0)\norm{a-b}$, so this is not possile unless $a = b$. Therefore, for a fixed $x$, the map $\beta$ has a global $C^\infty$ inverse in the coordinate $a$, which we denote by $\varphi$.

To see that $\varphi$ determines a time change, we check that $\alpha(a)x := \alpha_0(\varphi(a,x))x$ is an abelian action:

\begin{multline*} \alpha(a)(\alpha(b)x) = \alpha(a)( \alpha_0(\varphi(b,x))x ) = \alpha_0(\varphi(a,\alpha_0(\varphi(b,x))x))\alpha_0(\varphi(b,x))x \\
 = \alpha_0( \varphi(a,\alpha_0(\varphi(b,x))x)+\varphi(b,x)) x.
 \end{multline*}

We therefore need to check that $\varphi(a+b,x) = \varphi(a,\alpha_0(\varphi(b,x))x)+\varphi(b,x)$. Since we have shown that $\beta$ is invertible in the $a$ coordinate, it suffices to check equality { after} applying $\beta(\cdot,x)$ to each side. Then the desired equality follows exactly from the cocycle equation for $\beta$ over $\alpha_0$

\begin{multline*} \beta(\varphi(b,x) +  \varphi(a,\alpha_0(\varphi(b,x))x),x) = \beta(\varphi(b,x),x) + \beta(\varphi(a,\alpha_0(\varphi(b,x))x),\alpha_0(\varphi(b,x))x) \\ = b + a = \beta(\varphi(a+b,x),x).
\end{multline*}

Therefore, $\varphi$ determines an $\R^k$ group action $\alpha$ which is a time change of $\alpha_0$. It is clear from the definition that $\alpha$ is a $C^\infty$ group action, since $\beta$ is assumed to be $C^\infty$ in all coordinates, and the derivatives of $\varphi$ can be computed explicitly from the definition.
\end{proof}

\section{Anosov actions and coarse Lyapunov foliations}
\label{sec:coarse-lyapunov}

We now summarize the theory of coarse Lyapunov foliations, for details see \cite[Section 4.1]{spatzier-vinhage}. Given an Anosov action, through standard constructions from the theory of normal hyperbolicity theory, each Anosov element has a pair of H\"older foliations $W^s_a$ and $W^u_a$ with $C^r$ leaves. $W^*_a$ are unique integral foliations of the distributions $E^*_a$, $* = s,u$. By considering the action of other elements on such foliations, one may refine them to find {\it common stable manifolds} $W^s_{a_1,\dots,a_n}$ for any collection of Anosov elements $a_1,\dots,a_n$, which are characterized as

\[ W^s_{a_1,\dots,a_n}(x) = \set{ y \in X : d(\alpha(ka_i)x,\alpha(ka_i)y) \xrightarrow{k \to \infty} 0 \mbox{ for }i=1,\dots,n}.\]

{  For a fixed colleciton $a_1,\dots,a_n$, the common stable manifolds form a H\"older foliation with $C^r$ leaves, with corresponding distriubution} $TW^s_{a_1,\dots,a_n} = \bigcap_{i=1}^n E^s_{a_i}$. {  We call the corresponding foliation the common stable foliation determined by $a_1,\dots,a_n$.}

\begin{definition}
\label{def:cartan}
A common stable { foliation $\set{W^\beta(x) = W^s_{a_1,\dots,a_n}(x) : x \in X}$} is a {\it coarse Lyapunov foliation} of an action $\alpha : \R^k \curvearrowright X$ if for any Anosov element $a \in \R^k$, $W^\beta \subset W^s_a$ or $W^\beta \subset W^u_a$. We call $E^\beta = TW^\beta$ the corresponding {\it coarse Lyapunov distribution}. Let $\Delta$ denote an indexing set for the collection of coarse Lyapunov foliations.

We say that an Anosov action is {\it Cartan} if for every $\beta \in \Delta$, $\dim(W^\beta) = 1$. We say that an action is {\it totally Cartan} if it is Cartan and totally Anosov.
\end{definition}

%The following is analogous to the following fact from linear algebra: if $A \in GL(N,\R)$, $\R^N$ splits as a direct sum of generalized eigenspaces (allowing for Jordan blocks within the eigenspaces), and any $A$-invariant subspace is refined by the subspaces.

Call a point $x \in X$ {\it $\R^k$-periodic} for an $\R^k$-action $\alpha$ if $\alpha(\R^k)x$ is closed. Since all actions are assumed to be locally free, this implies that the $\alpha(\R^k)x$ is diffeomorhpic to $\mathbb{T}^k$, and that $\Stab(x)$ is a lattice in $\R^k$. The following can be found in \cite[Lemma 4.17]{spatzier-vinhage}. While it is stated there for totally Anosov actions, the totally modifier is required for the other list of equivalences.

\begin{lemma}
\label{lem:dense-periodic}
Let $\alpha : \R^k \curvearrowright X$ be a cone transitive, Anosov action. Then the set of $\R^k$-periodic points is dense.
\end{lemma}

\begin{lemma}
\label{lem:invariant-bundles}
If $\alpha : \R^k \curvearrowright X$ is a { cone transitive} Anosov $C^r$ group action on a $C^\infty$ manifold $X$, then $TX = T\mc O \oplus \bigoplus_{\beta \in \Delta} W^\beta$. If $\alpha$ is Cartan, and $V \subset TX$ is a {  continuous} $\R^k$-invariant distribution, then there exists a subset $\Phi \subset \Delta$ and a subbundle $V_{\mc O} \subset T \mc O$ such that $V = V_{\mc O} \bigoplus_{\beta \in \Phi} E^\beta$.
\end{lemma}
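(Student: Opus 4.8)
\emph{Proof proposal.} The plan is to treat the coarse Lyapunov distributions as the analogue of generalized eigenspaces and $T\mc O$ as the ``eigenvalue one'' part, then run the usual argument that an invariant subspace is a sum of these pieces (possibly cut down on the neutral part).

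First, for the decomposition $TX = T\mc O \oplus \bigoplus_{\beta \in \Delta} E^\beta$: I would fix any Anosov element $a$, giving $TX = T\mc O \oplus E^s_a \oplus E^u_a$, and then invoke the construction of coarse Lyapunov foliations recalled above and detailed in \cite[Section 4.1]{spatzier-vinhage}: every $E^\beta$ lies in $E^s_a$ or in $E^u_a$, and the common stable manifolds of finite collections of Anosov elements refine $E^s_a$ and $E^u_a$ into the direct sums of the $E^\beta$ they contain, so $E^s_a \oplus E^u_a = \bigoplus_{\beta\in\Delta}E^\beta$. This part is bookkeeping on top of the cited theory.

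For the second assertion, let $E$ be an $\R^k$-invariant (continuous) distribution. The key points are that $d\alpha(b)$ acts neutrally on $T\mc O$ — it is the identity in the frame of the orbit vector fields, because $\R^k$ is abelian — while it acts on each $E^\beta$ with a coarse rate recorded by the coarse Lyapunov functional $\chi_\beta$, and distinct $\beta$ belong to distinct coarse classes. I would choose an Anosov element $a$ generic enough that the numbers $0$ and $\{\chi_\beta(a)\}_{\beta\in\Delta}$ are pairwise distinct (this only excludes finitely many hyperplanes), so that the splitting $TX = T\mc O \oplus \bigoplus_\beta E^\beta$ is dominated under $\alpha(\Z a)$, again drawing on \cite[Section 4.1]{spatzier-vinhage}. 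Then, given $v \in E_x$ written as $v = v_{\mc O} + \sum_\beta v_\beta$ along the splitting, I would push $v$ forward by $d\alpha(na)$, renormalize by the norm of the dominant surviving component, and pass to a subsequence along which the orbit $\alpha(n_j a)x$ converges to some $y \in X$ (compactness of $X$); domination kills the other components in the limit, and continuity of $E$ and of the $E^\beta$ yields a nonzero vector of $E_y$ lying in a single $E^{\beta_0}_y$. Because the action is Cartan, $\dim E^{\beta_0}=1$, so $E^{\beta_0}_y \subseteq E_y$; peeling off the dominant rate and iterating (and running $-a$ to reach the other side of the splitting, with $T\mc O$ handled by its neutral behaviour) shows that at every point $E_x = (E_x\cap T_x\mc O)\oplus\bigoplus_\beta(E_x\cap E^\beta_x)$. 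Alternatively, one can argue at periodic orbits of $\alpha(\Z a)$, where $d\alpha$ is an honest automorphism whose spectral projections onto the $E^\beta$ are polynomials in it, and then spread the conclusion to all of $X$ by continuity.

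It remains to assemble $\Phi$ and $E_{\mc O}$: the functions $x \mapsto \dim(E_x\cap E^\beta_x)\in\{0,1\}$ and $x\mapsto \dim(E_x\cap T_x\mc O)$ are upper semicontinuous (intersections of continuous subbundles) and $\R^k$-invariant, and the pointwise decomposition forces their sum to equal the constant $\dim E$; nonnegative upper semicontinuous integer functions with constant sum are each constant, so on $X$ (argued on each connected component) the set $\Phi := \{\beta \in \Delta : E^\beta\subseteq E\}$ and the subbundle $E_{\mc O}:= E\cap T\mc O$ are well defined, and $E = E_{\mc O}\oplus\bigoplus_{\beta\in\Phi}E^\beta$. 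I expect the main obstacle to be the middle step — getting enough separation of the coarse rates to make renormalization isolate individual $E^\beta$-components; this is exactly where the Cartan hypothesis and the fine structure of coarse Lyapunov foliations enter, and where I would lean hardest on \cite[Section 4.1]{spatzier-vinhage}. A secondary nuisance is the globalization when the action is not assumed transitive, which the semicontinuity argument is designed to absorb.
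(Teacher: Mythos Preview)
Your alternative route---argue at periodic orbits, where $d\alpha(a)$ restricted to a fiber is an honest linear automorphism so that the spectral projections onto $T\mc O$, $E^s_a$, $E^u_a$ are polynomials in it, then spread by density of periodic orbits and continuity of $E$ and the $E^\beta$---is exactly what the paper does. The paper's phrasing: fix a periodic $p$, pick an Anosov $a$ in its stabilizer, split $V(p)$ along $T\mc O\oplus E^s_a\oplus E^u_a$ as a sum of generalized eigenspaces, extend to all of $X$ by density and continuity, then repeat with a second Anosov element $b$ to refine further, until one reaches the coarse Lyapunov pieces; the Cartan hypothesis then forces each one-dimensional $E^\beta$ to lie entirely inside or entirely outside $V$. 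One difference worth flagging: the paper never looks for a single generic $a$ separating all the $E^\beta$ at once; it refines one Anosov element at a time, which sidesteps the question of whether any single element dominates the whole coarse splitting.

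Your primary renormalization approach has two soft spots. First, the claim that for a generic Anosov $a$ the full decomposition $T\mc O\oplus\bigoplus_\beta E^\beta$ is dominated under $\alpha(\Z a)$ is not obviously available: by definition each $E^\beta$ sits in $E^s_a$ or $E^u_a$, so the stable/unstable dichotomy is dominated for $a$, but the finer splitting \emph{within} $E^s_a$ is only separated by \emph{other} Anosov elements, not necessarily by $a$ itself---this is precisely why the paper iterates. Second, and more seriously, the ``peeling'' step lands you at a limit point $y$ of the forward orbit of $x$, not at $x$: you conclude $E^{\beta_0}_y\subset E_y$, but to subtract $v_{\beta_0}$ from $v\in E_x$ and continue the induction you need $v_{\beta_0}\in E_x$, which you have not established. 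The periodic-orbit argument closes this gap because at a periodic $p$ the orbit returns to $p$. Your upper-semicontinuity wrap-up for globalizing $\Phi$ and $E_{\mc O}$ is a clean finish---more explicit, in fact, than the paper's appeal to continuity---but it presupposes the pointwise decomposition, so it cannot rescue the peeling step on its own.
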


\begin{proof}
The part of the lemma for Anosov actions follows from \cite[Corollary 4.6]{spatzier-vinhage}.

{ Now assume the Cartan condition,} and let $V$ be a {  continuous} $\R^k$ invariant distribution. Then fix { an $\R^k$-periodic} point $p \in X$. {  Such points are dense by Lemma \ref{lem:dense-periodic}.}% from the higher-rank closing lemma \cite[Theorem 4.15]{spatzier-vinhage} and cone transitivity).} 
Then choose %$v \in T_pX$, and
 an Anosov element $a \in \R^k$ such that $\alpha(a) p = p$. {  Note that since the stabilizer of $p$ is a lattice in $\R^k$, such an Anosov element exists in every open cone}. %Since the set of Anosov elements are open, convex and a union of lines, given any Anosov element $a'$, we may choose $a$ which fixes $p$ and shares the same splitting into stable and unstable distributions.
 Then since $TX = T\mc O \oplus E^s_a \oplus E^u_a$, and $d\alpha(a)V(p) = V(p)$, $V(p)$ has a conmmon refinement with the stable and unstable splitting at $p$ since they are sums of the generalized eigenspaces for $d\alpha(a)$. So there exists corresponding subspaces of $V(p)$ such that $V(p) = V^0_a(p) \oplus V^s_a(p) \oplus V^u_a(p)$, and $V^*_a(p) = E^*_a(p) \cap V(p)$.

Now, since all distributions are continuously varying and this splitting holds at periodic orbits, since the periodic orbits are dense, $V$ splits everywhere as $V^0 \oplus V^s_a \oplus V^u_a$. This { procedure} can be repeated for another Anosov element $b$ to refine each new invariant distribution into the stable and unstable distributions for $b$. In particular, since each common stable manifold is either a coarse Lyapunov distribution or can be refined, the final refinement gives a subspace of each coarse Lyapunov distribution. In particular, since the dimension of the coarse Lyapunov distributions are assumed to be 1 for Cartan actions, either the distribution appears fully as part of the final splitting of $V$, or does not appear at all.
\end{proof}

\begin{definition}
\label{def:accessibility}
Let $\alpha : \R^k \curvearrowright X$ be a Cartan action. A {\it coarse Lyapunov path} based at $x \in X$ is a finite sequence $\rho = (x = x_0,x_1,\dots, x_n)$ such that $x_{i+1} \in W^{\beta_i}(x_i)$ for some coarse Lyapunov foliation $W^{\beta_i}$. $c(\rho) = n$ is called the {\it combinatorial length} of the path $\rho$, and $L(\rho) = \sum_{i=0}^{n-1} d_{W^{\beta_i}}(x_i,x_{i+1})$ is called the {\it geometric length} of $\rho$. $e(\rho) = x_n$ is called the endpoint of $\rho$. Let $\mc P_{c,L}^\alpha(x) = \set{ \rho \mbox{ based at } x : L(\rho) \le L, c(\rho) = c}$, and note the $\mc P_{c,L}^\alpha$ carries a canonical topology making it homeomorphic to the $L^1$-ball of radius $L$ in $\R^n$.

$\alpha$ is said to be {\it accessible} if $\bigcup_{c,L >0} \mc P_{c,L}^\alpha(x) = X$ for some (equivalently, every) $x\in X$.

$\alpha$ is said to be {\it continuously accessible} if for every $x \in X$, there is some $\ve,c,L > 0$ and a continuous map $\tau : B(x,\ve) \to \mc P_{c,L}^\alpha(x)$ such that $e(\tau(y)) = y$ for all $y \in B(x,\ve)$.
\end{definition}

In the case of flows ($\R$-actions), contact flows are the clearest examples of continuously accessible flows, since one may parameterize the flow direction by moving along $su$-quadrilaterals (via the {\it temporal distance function}, see, e.g., \cite[Appendices A \& B]{liverani04}). Then one can leverage the local product structure to build a continuous parameterization via paths. Baire arguments suggest that continuous accessibility is not too far from accessibility, see \cite[Proposition 7.2]{asv}. Following \cite[Section 3.4]{BPW} and \cite[Theorem 3.4]{ps97}, we will use the continuous accessibility property to get robustness of accessibility:

\begin{lemma}
If $\alpha_0 : \R^k \curvearrowright X$ is a continuously accessible Cartan action, then there exists a $C^1$-neighborhood $\mc U$ of $\alpha_0$ in the space of $C^1$ $\R^k$-actions such that every $\alpha \in \mc U$ is an accessible Anosov action.
\end{lemma}

\begin{proof}
Notice that if $\alpha$ is sufficiently close to $\alpha_0$, then the corresponding coarse Lyapunov foliations are also close. In particular, given a coarse Lyapunov path $\rho$ for $\alpha_0$, one may find a corresponding coarse Lyapunov path for $\alpha$ with the same combinatorial pattern, and same lengths of legs. That is, we associate a map $\pi : \mc P_{c,L}^{\alpha_0}(x) \to \mc P_{c,L}^\alpha(x)$. Therefore, we may consider the following map from $B(x,\ve)$ to $X$: $q : y \mapsto e(\pi(\tau(y)))$. Then $q$ is $C^0$-close to $\id$.

Let $S$ denote the sphere of radius $\ve/2$ in $B(x,\ve)$. Then by choosing the pertubation small enough, we may assume that $0 \not\in q(S)$, so $q : S \to B(x,\ve) \setminus \set{x}$ is a $C^0$-perturbation of the identity and hence has the same degree after projecting back to $S$ along rays in some fixed coordinate chart. It follows that $q(B(x,\ve/2))$ contains a neighborhood of $x$, and hence the accessibility class of $x$. Since this is true for every $x$, it follows that the action $\alpha$ is accessible. 
\end{proof}

\begin{corollary}
\label{cor:accessible}
If $\alpha_0 : \R^k \curvearrowright X$ is a Cartan action defined as a $k$-fold product of continuously accessible Anosov flows on 3-manifolds, there exists a $C^1$-neighborhood $\mc U$ of $\alpha_0$ in the space of $C^1$ $\R^k$-actions such that every $\alpha \in \mc U$ is an accessible Anosov action.
\end{corollary}

%\begin{proof}
%{  For a product action, the coarse Lyapunov foliations are just the stable and unstable foliations in each flow making up the product.}
 %Indeed, notice that if each of the flows $\psi^{(i)}_t : Y_i \to Y_i$ are continuously accessibility with corresponding maps $\tau_i : B(x_i,\ve_i) \to \mc P_{c_i,L_i}$, then by concatenating the paths and setting $\ve = \min \ve_i$, $c = \sum c_i$ and $L = \sum L_i$, we may build a map $\tau : B(x,\ve) \to \mc P_{c,L}$ such that $e(\tau(y)) = y$ for all $y \in B(x,\ve)$. That is, $\alpha_0$ is continuously accessible.
%\end{proof}

\section{Rank one factors and hyperplanes}
\label{sec:rankone-hyperplanes}

{In this section, we wish to establish a way of detecting factors by considering the derivative cocycles along coarse Lyapunov distributions. To that end, we first establish the way in which coarse distributions interact with factors.

\begin{lemma}
\label{lem:factors-anosov}
Let $\alpha : \R^k \curvearrowright X$ be a cone transitive, $C^1$ Cartan action, and $\psi_t : \R \curvearrowright Y$ be a rank one factor action. Then $\psi_t$ is an Anosov flow on a 3-manifold or a transitive circle flow. Furthermore, if $\psi_t$ is Anosov, there exists a unique pair of coarse Lyapunov distributions $E^{\chi_+}$ and $E^{\chi_-}$ such that $d\pi(E^{\chi_+}) = E^s$ and $d\pi(E^{\chi_-}) = E^u$.
\end{lemma}

To prove Lemma \ref{lem:factors-anosov}, we use the following criterion established by Mane. Recall that a flow $\psi_t : \R \curvearrowright Y$ is {\it quasi-Anosov} if for any vector $v \in TY$, $\set{\norm{d\psi_t(v)} : t \in \R}$ is unbounded. Note that this implies every periodic orbit is hyperbolic, and hence each periodic point has well-defined stable and unstable manifolds. 

\begin{theorem}[Corollary 1, \cite{maneQuasi}]
If $\psi_t$ is quasi-Anosov and for every pair of periodic points $p,q$ of $\psi_t$, $\dim(W^s(p)) = \dim(W^s(q))$, then $\psi_t$ is Anosov.
\end{theorem}

\begin{proof}[Proof of Lemma \ref{lem:factors-anosov}]
We first show that $\psi_t$ is quasi-Anosov. Note that the distribution $E(x) := \ker d\pi(x)$ is $\alpha$-invariant, so by Lemma \ref{lem:invariant-bundles}, $E$ is a sum of coarse Lyapunov subbundles and a subbundle of the $\alpha$-orbit distribution. By the intertwining property $\pi \of \alpha(a) = \psi_{\sigma(a)} \of \pi$, it follows that the subbundle of the $\alpha$-orbit distribution is exactly the $\ker \sigma$-orbit distribution.

 Indeed, fix some $y \in Y$, $v \in T_yY$, and pick some (aribtrary) $x \in \pi^{-1}(y)$. Then if $v$ is not tangent to the $\psi_t$-orbit of $y$, any lift of $v$ will not be tangent to the $\alpha$-orbit of $x$ (a lift must exist by the submersion property). In particular, there exists an Anosov element $a$ for which $d\alpha(ka)v$ grows exponentially in $k$, and $v$ has a nontrivial coarse Lyapunov distribution which is not in $\ker d\pi$ at any point. Therefore,  $d\psi_{k\sigma(a)}(v)$ grows exponentially in $k$. It follows that $\psi_t$ is quasi-Anosov.
 
 We now check that the stable and unstable manifolds at each periodic orbit are of the same dimension. Indeed, fix an Anosov element $a$ such that $\sigma(a) = 1$. Such a choice is possible by first choosing an Anosov element $a_0$ such that $\sigma(a_0) \not= 0$. Since the set of Anosov elements is open and $\ker \sigma$ is codimension 1, such a choice is possible. Then simply let $a = \dfrac{1}{\sigma(a_0)}a_0$. We claim that $\dim(E^s_{\psi}(p))$ is always $\#\set{\chi \in \Delta : E^\chi \subset W^s_{\alpha(a)} \mbox{ and } E^\chi \not\subset \ker d\pi}$. Indeed, fix $x \in X$ such that $\pi(x)$ is a $\psi_t$-periodic orbit. Each distribution $E^\chi$ is 1-dimensional by the Cartan condition, and $d\pi$ will push the sum of the distributions to an invariant contracting distribution. Since at a periodic orbit, the decomposition into a contracting distribution, orbit distribution and expanding distribution is unique, the dimension is as described. 
 
 Finally, we show that either 0 or exactly two coarse Lyapunov distributions descend. Since we have shown that the flow is Anosov, we know that each coarse distribution $E^\chi$ of $\alpha$ which is not in $\ker d\pi$ must descend to a subspace of either the stable or unstable distribution of a distinguished Anosov element $a$. By the intertwining property $\pi \of \alpha(a) = \psi_{\sigma(a)} \of \pi$, if $E^\chi$ is contracted under $a$, it is also uniformly contracted under $a + b$ for any $b \in \ker \sigma$. Hence the set of contracting elements for $E^\chi$ is exactly a half space determined by $\ker \sigma$. If $E^\chi$ and $E^{\chi'}$ are distinct coarse Lyapunov distributions, there must exist an Anosov element which expands one and contracts the other. Thus, since we have determined that the set of contracting elements must be exactly one of two half spaces, there can only be at most 2 coarse Lyapunov distributions which descend to the factor. It cannot be exactly one since every periodic orbit would be attracting, a contradiction on a compact space. The result follows.
\end{proof}
}

\begin{lemma}
\label{lem:collapse-coarses}
Let $\R^k \curvearrowright X$ be a {  cone} transitive, $C^1$ Cartan action and $\psi_t : \R \curvearrowright Y$ be a $C^1$ rank one factor, with corresponding homomorphism $\sigma : \R^k \to \R$ and projection map $\pi : X \to Y$. Assume that $E^\chi$ is a coarse Lyapunov distribution. Then if $E^\chi  \cap \ker d\pi = \set{0}$ at some $x \in X$, there exists a continuous metric on $E^\chi$ such that for every $a \in \ker \sigma$, $da|_{E^\chi}$ is an isometry.
\end{lemma}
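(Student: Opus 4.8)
The plan is to upgrade the transversality assumption, which is given only at the single point $x$, to a statement valid at every point of $X$, and then to obtain the desired metric by pulling one back from $Y$. First I would record that $\ker d\pi$ is an $\R^k$-invariant continuous distribution: it has constant rank $\dim X - \dim Y$ because $\pi$ is a $C^1$ submersion, and differentiating the intertwining relation $\pi \of \alpha(a) = \psi_{\sigma(a)} \of \pi$ at a point $z$ gives $d\pi_{\alpha(a)z} \of d\alpha(a)_z = d(\psi_{\sigma(a)})_{\pi(z)} \of d\pi_z$, so $d\alpha(a)_z$ carries $\ker d\pi$ at $z$ isomorphically onto $\ker d\pi$ at $\alpha(a)z$.

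The key step is then to apply Lemma \ref{lem:invariant-bundles} to the invariant distribution $\ker d\pi$. Since $\alpha$ is Cartan, this produces a subset $\Phi \subset \Delta$ and a subbundle $E_{\mc O} \subset T\mc O$ with $\ker d\pi = E_{\mc O} \oplus \bigoplus_{\beta \in \Phi} E^\beta$. Because $E^\chi$ is one-dimensional, the decomposition $TX = T\mc O \oplus \bigoplus_{\beta \in \Delta} E^\beta$ forces an all-or-nothing dichotomy: if $\chi \in \Phi$ then $E^\chi \subset \ker d\pi$ at every point, while if $\chi \notin \Phi$ then $E^\chi \cap \ker d\pi = \set{0}$ at every point. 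The hypothesis that $E^\chi \cap \ker d\pi = \set{0}$ at the single point $x$ excludes the first alternative, so $d\pi|_{E^\chi}$ is injective, i.e.\ a fiberwise linear isomorphism onto its image, at every point of $X$.

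With this in hand, I would fix any $C^\infty$ Riemannian metric $g$ on $Y$ and define $\norm{v}_\chi := \norm{d\pi(v)}_g$ for $v \in E^\chi$; by the previous step this is a well-defined metric on $E^\chi$, continuous (indeed as regular as $E^\chi$ and $\pi$ allow). For $a \in \ker\sigma$ we have $\sigma(a) = 0$, hence $\psi_{\sigma(a)} = \id_Y$ and $\pi \of \alpha(a) = \pi$, so $d\pi_{\alpha(a)z} \of d\alpha(a)_z = d\pi_z$. Therefore, for $v \in E^\chi(z)$,
\[ \norm{d\alpha(a)v}_\chi = \norm{d\pi_{\alpha(a)z}(d\alpha(a)_z v)}_g = \norm{d\pi_z(v)}_g = \norm{v}_\chi, \]
which is precisely the assertion that $d\alpha(a)|_{E^\chi}$ (the map denoted $da|_{E^\chi}$ in the statement) is an isometry of $\norm{\,\cdot\,}_\chi$.

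I expect essentially all of the content to lie in the middle step: passing from transversality at one point to transversality everywhere. The set where $E^\chi$ meets $\ker d\pi$ trivially is open and $\R^k$-invariant, but for a merely transitive action an open invariant set need not be all of $X$, so the Cartan hypothesis is genuinely needed — it is what makes each coarse Lyapunov distribution a line, hence ``all or nothing'' relative to any invariant subbundle, via Lemma \ref{lem:invariant-bundles}. The first and third steps are just differentiation of the factor relation.
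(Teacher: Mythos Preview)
Your proposal is correct and follows essentially the same approach as the paper: invoke Lemma~\ref{lem:invariant-bundles} on the invariant subbundle $\ker d\pi$ to upgrade transversality at a point to transversality everywhere, then pull back a metric from $Y$ via $d\pi$ and use that $\ker\sigma$ acts trivially on $Y$. Your write-up is in fact a bit more careful than the paper's in justifying the invariance of $\ker d\pi$ and in making the all-or-nothing dichotomy explicit.
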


\begin{proof}

By Lemma \ref{lem:factors-anosov}, $E^\chi$ must descend to either the stable or unstable distribution of $\psi_t$. 
Choose any metric $\norm{\cdot}_Y$ on $Y$ and if $v \in E^\chi$, let $\norm{v}_{E^\chi} := \norm{d\pi(v)}_Y$. %Since $d\pi|_E$ is injective, this is a well-defined continuous norm. 
By construction, $\norm{\cdot}_{E^\chi}$ is continuous, and if $a \in \ker \sigma$, the action of $a$ on $Y$ is trivial, so:

\[\norm{d\alpha(a)(v)}_{E^\chi} = \norm{d\pi da(v)}_Y = \norm{d\pi(v)}_Y = \norm{v}_E. \] 
\end{proof}

\begin{corollary}
\label{cor:no-rank1-criteria}
Let $k \ge 2$ and $\alpha : \R^k \curvearrowright X$ be a cone transitive, accessible, $C^r$ Cartan action with such that for every $\beta \in \Delta$ and $a \in \R^k \setminus \set{0}$, there exists $x \in X$ such that $\displaystyle\lim_{n \to \infty} \frac{1}{n}\log \norm{d(na)|_{E^\beta}(x)} \not= 0$. Then $\alpha$ has no nontrivial $C^r$ rank one factors.
\end{corollary}

\begin{proof}
%Note that any coarse Lyapunov distribution $E^\beta$ is continuous with a complementary $\R^k$-invariant subbundle.
Our assumption implies that there is no subgroup of $\R^k$ which acts isometrically on $E^\beta$ for any continuous metric. Thereofore, if $\pi : X \to Y$ determines a rank one factor, then $E^\beta \subset \ker d\pi$ for every coarse Lyapunov distrubiton by Lemma \ref{lem:collapse-coarses}. Therefore, $\pi^{-1}(x)$ contains all coarse Lyapunov foliations, and since the action is accessible, $\pi^{-1}(x) = X$. That is, $\pi$ is a projection onto a point and there are no nontrivial rank one factors.
\end{proof}

\begin{corollary}
\label{cor:non-homogeneous}
Under the same assumptions as Corollary \ref{cor:no-rank1-criteria}, the action $\alpha$ is not homogeneous.
\end{corollary}

\begin{proof}
This follows almost immediately. Notice that the derivative of a homogeneous action is always determined by the adjoint representation. Since the coarse Lyapunov distribution is 1-dimensional, it must be spanned by a joint eigenvector of the $\R^k$-action. Since every functional from $\R^k$ to $\R$ has a nontrivial kernel, there must exists some $a$ such that $\norm{da|_{E^\chi}} =1$ with respect to any right-invariant metric. This is incompatible with the assumptions.
\end{proof}

\section{Construction of the example}

Let $f_s : Y_1 \to Y_1$ and $g_t : Y_2 \to Y_2$ be continuously accessible Anosov flows { with $\dim(Y_i) = 3$, $i = 1,2$}. 
%Let  $M = SL(2,\R) / \Gamma \cong T^1S$ be the unit tangent bundle of a hyperbolic surface, with geodesic flow $g_t : M \to M$. 
Let $M = Y_1 \times Y_2$ and consider the product action $\alpha_0 : \R^2 \curvearrowright M \times M$ defined by

\[\alpha_0(s,t)(x_1,x_2) = (f_s(x_1),g_t(x_2)). \]

Then $\alpha_0$ is (totally) Cartan with four coarse Lyapunov distributions, $W^{\pm \chi_1}$, $W^{\pm \chi_2}$ corresponding to the stable and unstable bundles in each factor of the action. That is, $E^{\chi_1} = E^s_f \times \set{0}$, $E^{-\chi_1} = E^u_f \times \set{0}$, $E^{\chi_2} = \set{0} \times E^s_g$ and $E^{-\chi_2} = \set{0} \times E^u_g$. Furthermore, the elements $(\pm 1,\pm 1) \in \R^2$ are Anosov elements of the action. Let $\ve_1$ be such that if $F : M \to M$ is such that $d_{C^1}(F,a) < \ve_1$ for $a = (\pm 1,\pm 1)$, then $F$ acts normally hyperbolically with respect to a nearby foliation, and nearby distributions (such a $\ve_1$ exists by Hirsch-Pugh-Shub normal hyperbolicity theory). Let $\ve_0$ be as in Lemma \ref{lem:build-time-change} and choose $\delta < \min\set{\ve_0/4,\ve_1/100}$ and points $p_1,p_2 \in Y_1$, $q_1,q_2 \in Y_2$ which lie on distinct periodic orbits of $f_t$ and $g_s$, respectively. We may assume that a continuous Riemannian metric on $Y_1$ and $Y_2$ has been chosen so that there exist coefficients $\lambda_{i,*}$ and $\mu_{i,*}$, $i = 1,2$ and $* = s$ or $u$ such that

\[ \norm{df_t|_{E^*_f}}(p_i) = e^{t\lambda_{i,*}}, \qquad \norm{dg_s|_{E^*_s}}(q_i) = e^{s\mu_{i,*}} \qquad \mbox{for }i=1,2, \; * = s \mbox{ or }u. \]

 Finally, pick functions $u_i : Y_i \to \R$, $i = 1,2$ such that

\begin{enumerate}
\item $u_i$ is $C^\infty$, $i = 1,2$
\item $u_1(f_t(p_1)) \equiv u_2(g_s(q_1)) \equiv \delta$ for all $s,t \in \R$
\item $u_1(f_t(p_2)) \equiv u_2(g_s(q_2)) \equiv -\delta$ for all $s,t \in \R$
\item $\abs{u_1},\abs{u_2} \le 2\delta$
\end{enumerate}

Such functions $u_i$ generate cocycles $\theta_i$ over the flows $f_t$ and $g_s$ via the formula $\theta_1(t,x) = \int_0^t u_1(f_\tau(x)) \, d\tau$ and $\theta_2(s,x) = \int_0^t u_2(g_\tau(x)) \, d\tau$. 

Then define a cocycle $\beta$ over $\alpha_0$ by:

\[ \beta(s,t;x) = (s - \theta_2(t,x_2),t - \theta_1(s,x_1)). \]

One easily verifies that $\beta$ satsfies property \eqref{eq:cocycle-eqn} for the action $\alpha_0$. Furthermore, by the smallness assumption on $u_i$, $i = 1,2$, the cocycle $\beta$ also satisfies the assumptions of Lemma \ref{lem:build-time-change}. Let $\alpha$ be the corresponding time change of $\alpha_0$. We may further assume that $\delta$ is chosen small enough so that $\alpha \in \mc U$, where $\mc U$ is the neigbhorhood in Corollary \ref{cor:accessible}.

\begin{theorem}
$\alpha$ is a $C^\infty$ Cartan action without rank one factors, and which is not homogeneous.
\end{theorem}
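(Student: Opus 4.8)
The plan is to reduce everything to Lemmas~\ref{lem:no-rank1-criteria} and~\ref{lem:non-homogeneous}. So I need two things: that $\alpha$ is a strongly transitive $C^\infty$ Cartan action with locally transitive coarse Lyapunov foliations; and, for every coarse Lyapunov distribution $E^\chi$ of $\alpha$ and every $a\in\R^2\setminus\set{0}$, a point $x$ with $\lim_n\frac1n\log\norm{d\alpha(na)|_{E^\chi}(x)}\ne0$. For the first: $\beta$ is $C^\infty$, and since $d_a\beta(0,x)-\id$ has operator norm $\le\max(\abs{u_1},\abs{u_2})\le 2\delta<\ve_0$, Lemma~\ref{lem:build-time-change} yields the $C^\infty$ time change $\alpha$; its determining function $\varphi$ is $C^1$-close to $(a,x)\mapsto a$ (since $\norm{\beta(a,x)-a}\le\ve_0\norm{a}$ and inversion is continuous near the identity), so $\alpha$ is $C^1$-close to $\alpha_0$, hence lies in $\mc U$, so it is Anosov with locally transitive coarse Lyapunov foliations (Lemma~\ref{lem:loc-transitive}). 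For strong transitivity I would transport that of $\alpha_0$: the diagonal flow $s\mapsto\alpha_0(s,s)=f_s\times g_s$ is topologically mixing (a product of topologically mixing flows is topologically mixing), so some $x$ has a dense forward diagonal orbit, whence $\set{\alpha_0(a)x:a\in C_0}$ is dense for any open cone $C_0$ containing the diagonal ray inside the open first quadrant; since $\varphi$ is $C^1$-close to the identity, a slightly wider open cone $C$ has $\varphi(C,x)\supseteq C_0$, so $\set{\alpha(a)x:a\in C}$ is dense, and $\overline{C}\setminus\set{0}$ consists of directions Anosov for $\alpha$ once $\delta$ is small.

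For the Cartan property, the idea is that the time change preserves the ``product center'' structure: the foliations $W^{cs}_f\times\mc O_g$, $W^{cu}_f\times\mc O_g$, $\mc O_f\times W^{cs}_g$, $\mc O_f\times W^{cu}_g$ --- products of an orbit foliation with a weak (un)stable foliation --- are $\alpha$-invariant, because $\alpha(a)$ keeps every point inside its own $f$-orbit times its own $g$-orbit. Their tangent bundles are $E^s_f\oplus T\mc O$, $E^u_f\oplus T\mc O$, $E^s_g\oplus T\mc O$, $E^u_g\oplus T\mc O$, and mod $T\mc O$ they give four $\alpha$-invariant lines $\overline{E^s_f},\overline{E^u_f},\overline{E^s_g},\overline{E^u_g}$ in $TX/T\mc O$. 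Any coarse Lyapunov distribution of $\alpha$ lies in $W^{s}_a$ or $W^{u}_a$ of every Anosov $a$; applied to elements of $\alpha$ near $(1,1)$ and near $(1,-1)$ (whose stable/unstable bundles are $C^0$-close to those of $\alpha_0$) this shows every coarse Lyapunov distribution of $\alpha$ lies, mod $T\mc O$, in one of those four lines. Combined with $TX=T\mc O\oplus\bigoplus_\chi E^\chi$ from Lemma~\ref{lem:invariant-bundles}, this forces exactly four, one-dimensional, coarse Lyapunov distributions, with $E^{\chi_1}(\alpha)\subset E^s_f\oplus T\mc O$ and likewise for the others; so $\alpha$ is Cartan.

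The core of the argument is the exponent computation, where the functions $u_i$ finally matter. On each of the three $2$-tori $\mc P_{11}=\mc O_f(p_1)\times\mc O_g(q_1)$, $\mc P_{12}=\mc O_f(p_1)\times\mc O_g(q_2)$, $\mc P_{21}=\mc O_f(p_2)\times\mc O_g(q_1)$, properties (2)--(3) force $\theta_1(s,\cdot)\equiv\pm\delta s$ and $\theta_2(t,\cdot)\equiv\pm\delta t$, so $\beta$ restricts on $\mc P_{ij}$ to a \emph{fixed linear} map $B_{ij}$ with $1$'s on the diagonal and $\pm\delta$ off it (signs from (2)--(3)); hence on $\mc P_{ij}$ the action $\alpha$ is $\alpha_0$ precomposed with the fixed substitution $B_{ij}^{-1}$. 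Therefore at a point of $\mc P_{1j}$ the line $E^{\chi_1}(\alpha)\subset E^s_f\oplus T\mc O$ is scaled by $d\alpha(na)$ with Lyapunov exponent $\lambda_{1,s}\,(B_{1j}^{-1}a)_1$: the tilt of $E^{\chi_1}(\alpha)$ into $T\mc O$ is exponentially neutral because along $\mc P_{ij}$ it is carried by the torus translation $\alpha_0(B_{ij}^{-1}na)$, and $\norm{d\alpha(na)v}$ for $v\in E^{\chi_1}(\alpha)$ is comparable to the norm of its image in $TX/T\mc O$. This exponent vanishes exactly when $a$ lies on the line $B_{1j}(\set{0}\times\R)$, i.e.\ on $\R(-\delta,1)$ for $j=1$ and on $\R(\delta,1)$ for $j=2$; as $\delta\ne0$ these are distinct, so no $a\ne0$ is isometric on $E^{\chi_1}(\alpha)$ at both $(p_1,q_1)$ and $(p_1,q_2)$. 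The identical computation with $\lambda_{1,u}$ covers $E^{-\chi_1}$, and the one with $\mu_{1,s},\mu_{1,u}$ and the second coordinate $(B_{i1}^{-1}a)_2$ covers $E^{\pm\chi_2}$, now comparing $(p_1,q_1)$ with $(p_2,q_1)$ (bad lines $\R(1,-\delta)$ and $\R(1,\delta)$). Thus Lemma~\ref{lem:no-rank1-criteria} gives that $\alpha$ has no nontrivial $C^1$ --- hence no $C^\infty$ --- rank one factor, and Lemma~\ref{lem:non-homogeneous} gives that it is not homogeneous.

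The main obstacle I expect is the first, ``soft'', part: making rigorous that a small time change of the totally Cartan product $\alpha_0$ stays Cartan and strongly transitive (this needs that the two Lyapunov hyperplanes $\set{s=0}$, $\set{t=0}$ of $\alpha_0$ are in general position, so that no coarse Lyapunov distributions merge under perturbation, and the persistence of invariant bundles of nearby Anosov elements), together with the verification in the core step that the orbit-direction component of each $E^\chi(\alpha)$ contributes nothing to the exponent --- which is exactly why the computation is localized to the bitori $\mc P_{ij}$, where $\alpha$ is genuinely linearly conjugate to $\alpha_0$. The conceptual crux, however, is the design of the $u_i$: prescribing the values $+\delta$ and $-\delta$ on two distinct periodic orbits makes each coarse Lyapunov ``isometry line'' depend on the base point ($=B_{ij}$ applied to a coordinate axis), and the sign pattern is arranged so that for each of the four coarse Lyapunov distributions the two candidate isometry lines are distinct --- precisely what is needed to kill all rank one factors and homogeneity simultaneously.
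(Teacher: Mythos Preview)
Your proposal is correct and follows essentially the same route as the paper: reduce to Lemmas~\ref{lem:no-rank1-criteria} and~\ref{lem:non-homogeneous}, then compute the exponent of each coarse Lyapunov distribution at points on the invariant bitori where $\beta$ is a fixed linear map, obtaining two distinct ``isometry lines'' per distribution. The only cosmetic differences are that the paper uses the pair $(p_1,q_2),(p_2,q_1)$ for $E^{\pm\chi_1}$ (your $\mc P_{12},\mc P_{21}$, both with determinant $1+\delta^2$) rather than your $\mc P_{11},\mc P_{12}$, and that you spell out the strong transitivity and Cartan verifications which the paper leaves implicit in its invocation of normal hyperbolicity and Lemma~\ref{lem:loc-transitive}.
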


\begin{proof}
First, notice that $(\pm 1,\pm 1)$ are still Anosov elements since our cocycle $\beta$ was sufficiently close to $\id$. Therefore, we have the same indexing set for the coarse Lyapunov foliations $\set{\pm \chi_1,\pm \chi_2}$, even though their distributions and foliations may be perturbed.

By Corollary \ref{cor:accessible}, $\alpha$ us accessible. So by Corollaries \ref{cor:no-rank1-criteria} and \ref{cor:non-homogeneous}, it suffices to show that given any $\chi \in \Delta$, every $a \in \R^2 \setminus \set{0}$ has some point $x \in X$ such that $\displaystyle\lim_{n\to \infty} n^{-1}\log \norm{d\alpha(na)|_{E^{\chi}}(p)} \not= 1$. We work with $E^{\chi_1}$, since all other coarse Lyapunov distributions will have a symmetric argument. Consider the derivatives of $a$ at the points $x = (p_1,q_2) \in M \times M$ and $y = (p_2,q_1) \in M \times M$. By assumption, for fixed $(s,t)$ we may explicitly compute $\beta$ near near $x$, $\beta(s,t;x) = (s+\delta t,t-\delta s)$. Therefore, with $(s,t)$ fixed and $x'$ near $x$, 

\[ \varphi(s,t;x') = \frac{1}{1+\delta^2}(s -\delta t,t + \delta s). \] 

Fix $a = (s,t)$, so that the function $\varphi$ is constant in a neighborhood of the $p_1$-orbit. Therefore, since the time change is constant in a neigbhorhood, $E^{\chi_1}$ is exactly a coarse Lyapunov distribution for $\alpha$ along the orbit $x$, as it is an invariant distribution transverse to $\mc O$ at $x$. Denote $(s',t') = \varphi(s,t;x)$. Now, we get that if $v \in E^{\chi_1}(x)$,

\[ d\alpha(s,t)v = d\alpha_0(s',t')v = e^{\lambda_{1,u}(s-\delta t)/(1+\delta^2)}. \]

By a symmetric computation, for fixed $a = (s,t)$, $\varphi(s,t;y') = \frac{1}{1+\delta^2}(s+\delta t,t  - \delta s)$ with $y'$ near $y$. Therefore, if $v \in E^{\chi_1}(y)$

\[ d\alpha(a)v = e^{\lambda_{2,u}(s+\delta t)/(1+\delta^2)}. \]

It is not possible that $s - \delta t = s + \delta t = 0$ unless $s=t = 0$. Therefore, no non-identity element has zero exponents for $\chi_1$ at every $x \in X$. We may repeat this process for $-\chi_1$ and $\pm \chi_2$. Then by Corollary \ref{cor:no-rank1-criteria}, the action $\alpha$ has no rank one factors, and Corollary \ref{cor:non-homogeneous}, the action is not homogeneous.
\end{proof}

\section{Remarks on the example and conjecture}
\label{sec:updates}

We begin by briefly noting that this example was discovered in the context of several other unexpected examples, and is indirectly related to them. In \cite{spatzier-vinhage}, such examples are discussed at length. Another important example of a $\Z^2$ action with nontrival coexistence of rigidity and flexibility properties was recently constructed by Damjanovic, Wilkinson and Xu \cite{DWX}.

The Katok-Spatzier conjecture can be reformulated in a variety of settings. One way to adjust the conjecture is to strengthen the assumptions. In the formulation of Conjecture \ref{conj:kat-spa}, one assumes that the $\R^k$-action has no $C^\infty$ rank one factors. Notice every $C^\infty$ rank one factor is a continuous rank one factor, and in the measure-preserving setting, every continuous rank one factor is a measurable rank one factor. Therefore, one may consider asking the action to have no continuous, or no measurable rank one factors (with respect to an invariant volume) in order to guarantee rigidity.

One should expect that these examples remain counterexamples with to such revisions of Conjecture \ref{conj:kat-spa}. Indeed, one may see the destruction of a measurable rank one factor when one only destroys one smooth factor. When one uses the cocycle $\beta(s,t;x) = (s,t-\theta_1(s,x_1))$, we may explicitly compute the corresponding function $\varphi(s,t;x) = (s,t+\theta_1(s,x_1))$. Then the time change $\alpha$ induced by $\varphi$ contains a skew product action: the horizontal direction $(s,0)$ is exactly a skew product. Skew products determined by cocycles not cohomologous to a constant are ergodic. Combined with the following, this shows that the projection onto the second factor of $M = Y_1 \times Y_2$ is no longer a rank one factor.

\begin{lemma}
\label{lem:lines-ergodic}
A measure-preserving action $\alpha : \R^2 \curvearrowright (X,\mu)$ has a nontrivial measurable rank one factor if and only if there exists a line $L \subset \R^2$ such that the restriction of the action to $L$ is not ergodic.
\end{lemma}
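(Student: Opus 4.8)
The plan is to characterize measurable rank one factors via the structure of invariant $\sigma$-algebras and to connect non-ergodicity of a line to the existence of nonconstant invariant functions for that line. First I would make precise what "nontrivial measurable rank one factor" means in the measure-preserving category: a measurable, $\mu$-a.e. defined map $\pi : X \to Y$ onto a Lebesgue space $(Y,\nu)$ with $\pi_*\mu = \nu$, together with a surjective homomorphism $\sigma : \R^2 \to \R$ (or $\to \Z$, but in the $\R^2$ case the target is $\R$), intertwining the $\R^2$-action with a measurable fixed-point-free flow $\psi_t$ on $Y$. The key observation is that $\ker\sigma$ is a line $L \subset \R^2$, and since $\sigma(a) = 0$ for $a \in L$ we get $\pi(\alpha(a)x) = \psi_0 \pi(x) = \pi(x)$ for all $a \in L$; hence every function of the form $h \circ \pi$ with $h \in L^2(Y,\nu)$ is $L$-invariant. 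Since $\pi$ is nontrivial (the factor is not a point), there are nonconstant such functions, so the restriction of the action to $L = \ker\sigma$ is not ergodic. That gives one direction.

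For the converse, suppose $L \subset \R^2$ is a line such that $\alpha|_L$ is not ergodic, so there is a nonconstant $L$-invariant $f \in L^2(X,\mu)$. I would then consider the full $\R^2$-action on the $\sigma$-algebra $\mathcal F$ generated by the orbit $\{f \circ \alpha(a) : a \in \R^2\}$ — or more cleanly, the smallest $\R^2$-invariant sub-$\sigma$-algebra containing the $L$-invariant functions. Because $L$ is normal (everything is abelian) and $\R^2/L \cong \R$, the quotient group $\R$ acts on the space $L^2(X,\mathcal F,\mu)^{L} = L^2$ of $L$-invariant functions; pick a complementary one-parameter subgroup $\{b_t\}$ to $L$, giving a measurable flow on the factor. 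Concretely, let $\mathcal A \subset L^2(X,\mu)$ be the closed subspace of $L$-invariant functions; this is an $\R^2$-invariant subspace (since $L$ is normal), it is a von Neumann algebra / corresponds to a factor $(Y,\nu)$ by Rokhlin's theory of Lebesgue spaces, the $\R^2$-action descends, and $L$ acts trivially on it — so the descended action factors through $\R^2/L \cong \R$, giving the flow $\psi$. Nontriviality of $f$ makes $Y$ nontrivial, and one checks $\psi$ is fixed-point free after possibly discarding the fixed-point set (which is itself a smaller factor; if one wants fixed-point-freeness one restricts attention to the part of $\mathcal A$ on which $\R^2/L$ acts with no fixed points, which is nonempty precisely because $f$ is nonconstant and, being $L$-invariant but not $\R^2$-invariant unless the whole action is non-ergodic — a case one can treat separately or absorb).

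The main obstacle I expect is the converse direction's bookkeeping: producing an honest \emph{fixed-point-free} flow on a genuinely nontrivial factor, rather than just a non-ergodic restriction. The subtlety is that the $L$-invariant functions might all be $\R^2$-invariant (if the action were not ergodic to begin with), in which case the "flow" on the factor is trivial and fixed points are everywhere. I would handle this by first reducing to the ergodic case via the ergodic decomposition — on each ergodic component the $L$-invariant functions that are not globally invariant genuinely move under $\R^2/L$ — or by building the factor from a single well-chosen $L$-invariant function $f$ whose $\R^2$-orbit closure in $L^2$ is infinite-dimensional, which is automatic when $f$ is $L$-invariant but $\alpha|_{L'}$-ergodic for a transverse line $L'$. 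The rest is the standard correspondence between $\R^2$-invariant complete sub-$\sigma$-algebras of a Lebesgue space and measurable factor actions, which I would cite rather than reprove.
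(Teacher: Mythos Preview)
Your forward direction is identical to the paper's. For the converse, the paper takes the ergodic decomposition of $\mu$ with respect to the $L$-action, obtaining a map $\phi : (X,\mu) \to (\mathcal M(L),\nu)$ into the space of $L$-ergodic measures, and observes that the $\R^2$-action pushes forward to $\mathcal M(L)$ with $L$ acting trivially. This is the same factor you build from the $\sigma$-algebra of $L$-invariant functions via Rokhlin's correspondence; the space of $L$-ergodic components \emph{is} the factor associated to the $L$-invariant $\sigma$-algebra, so the two constructions coincide. In that sense your approach is essentially the paper's, just phrased in the language of invariant sub-$\sigma$-algebras rather than ergodic decomposition.

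One point worth noting: you spend effort worrying about whether the resulting flow on the factor is fixed-point free, and you correctly flag the edge case where all $L$-invariant functions might already be $\R^2$-invariant. The paper's proof simply does not address this; it declares the map to $\mathcal M(L)$ a rank one factor without checking fixed-point-freeness of the induced $\R$-flow. So your caution here is warranted, and your proposed resolution (reduce to the ergodic case, or note that non-ergodicity of $L$ together with ergodicity of $\R^2$ forces the $\R^2/L$-action on the factor to be nontrivial) is the right way to close that gap if one insists on the fixed-point-free clause in the measurable category.
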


\begin{proof}
First, assume that there exists a rank one factor $\psi_t : (Y,\nu) \to (Y,\nu)$ determined by a measurable map $\pi : X \to Y$ such that $\pi_*\mu = \nu$ and homomorphism $\sigma : \R^2 \to \R$. Then if $L = \ker \sigma$, $L$ acts trivially on $Y$. Since $Y$ is nontrivial, any function on $X$ defined by $\psi \of \pi$, for some measurable function $\psi : Y \to \R$ is invariant under $L$. Since $Y$ is not trivial, there exist nontrivial $L$-invariant functions, and the $L$-action is not ergodic.

Now, assume that the restriction of the action to $L$ is  not ergodic. By the ergodic decomposition theorem, there exists a $\mu$-almost-everywhere defined map to the space of $L$-invariant measures $\mc M(L)$, $\phi : (X,\mu) \to (\mc M(L),\nu)$ such that for any $f \in L^1(X,\mu)$,

\[ \lim_{T\to \infty} \dfrac{1}{T} \int_0^T f(\psi_{t\ell}(x)) \, dt = \int_X f \, d\phi(x). \]

By construction, the action of $\R^k$ descends to $(\mc M(L),\nu)$ via $\alpha(a)m = a_*m$, and the $L$-action is trivial. That is, $\phi$ determines a measurable rank one factor of the $\R^k$ action.
\end{proof}

Investigation of these examples as measure preserving transformations, in particular their ergodic and statistical properties, is ongoing.

Another way to account for these new examples would be to ask for no rank one factors, even after the modifications used to produce these new examples.

\begin{question}
If no $C^\infty$ time change of a cone transitive, $C^\infty$ Anosov action $\alpha : \R^k \curvearrowright X$ has a $C^\infty$ rank one factor, is $\alpha$ $C^\infty$ conjugate to an algebraic system?
\end{question}

Another interpretation would be to ignore the parameterization of orbits induced by the action altogether, and consider only the orbit foliations.

\begin{question}
Assume that $\alpha : \R^k \curvearrowright X$ is a cone transitive, $C^\infty$, Anosov action, and that there does not exist a nontrivial $C^\infty$ flow $\psi_t : Y \to Y$ and submersion $\pi : X \to Y$ such that $\pi(\alpha(\R^k)x) = \set{\psi_t(\pi(x)) : t \in \R}$ for all $x \in X$. Is $\alpha$ $C^\infty$ conjugate to an algebraic system?
\end{question}

In view of Remark \ref{rmk:params-matter}, allowing for time changes is a more accurate reflection of the spirit of the rigidity program. A time change is determined by a cocycle over the new action. If an action $\alpha$ is cocycle-rigid, as are the homogeneous actions without rank one factors, any $C^r$ time change should be smoothly conjugate to a linear time change of $\alpha$. Therefore, even if one allows for a time change, if rigidity holds, that time change would be trivial. Thus, one still obtains a smooth conjugacy with the original action.

Finally, when the action is cone transitive and {\it totally} Cartan (see Definitions \ref{def:anosov} and \ref{def:cartan}), the main theorem of \cite{spatzier-vinhage} implies that if the action has no rank one factor, it is $C^\infty$ conjugate to an algebraic system. This immediately implies that the examples here are {\it not} totally Cartan (which is also observable directly from computations), and motivates the following

\begin{question}
Let $\R^k \curvearrowright X$ be a cone transitive, Cartan action. Is there a $C^\infty$ time change of the action which is totally Cartan?
\end{question}

\bibliographystyle{plain}
\bibliography{rigidity}

\end{document}